\numberwithin{equation}{section}
\newcommand\void[1]{}
\newcommand{\C}{\mathbb{C}}
\newcommand{\CB}{\mathcal{B}}
\newcommand{\CC}{\mathcal{C}}
\newcommand{\CD}{\mathcal{D}}
\newcommand{\CE}{\mathcal{E}}
 \DeclareMathOperator{\Id}{Id}
 \DeclareMathOperator{\id}{id}
\newcommand{\adj}[4]{\xymatrix{ #1 \ar@<.5ex>[r]^-{#3} & #2 \ar@<.5ex>[l]^-{#4}}}
\newcommand{\Cat}{\mathcal{C}at}
\newcommand{\rev}{\mathrm{rev}}
\newcommand{\op}{\mathrm{op}}
\newcommand{\one}{\mathbf1}
\newtheorem{thm}{Theorem}[section]
\newtheorem{prop}[thm]{Proposition}
\newtheorem{cor}[thm]{Corollary}
\newtheorem{prop-defn}[thm]{Proposition-Definition}
\theoremstyle{definition}
\newtheorem{defn}[thm]{Definition}
\newtheorem{const}[thm]{Construction}
\newtheorem{exam}[thm]{Example}
\newtheorem{rem}[thm]{Remark}
\theoremstyle{remark}
\begin{document}

\begin{abstract}
We define the Drinfeld center of a monoidal category enriched over a braided monoidal category, and show that every modular tensor category can be realized in a canonical way as the Drinfeld center of a self-enriched monoidal category. We also give a generalization of this result for important applications in physics.
\end{abstract}

\title{Drinfeld center of enriched monoidal categories}
\author{Liang Kong}
\address{Yau Mathematical Sciences center, Tsinghua University, Beijing 100084, China (current address) \\ Department of Mathematics and Statistics, University of New Hampshire, Durham, NH 03824, USA}
\email{lkong@math.tsinghua.edu.cn}
\author{Hao Zheng}
\address{Department of Mathematics, Peking University, Beijing 100871, China}
\email{hzheng@math.pku.edu.cn}
\maketitle

\section{Introduction}

Enriched categories have been extensively studied in the past decades since they were introduced in \cite{EK}. Monoidal categories enriched over {\em symmetric} monoidal categories were also used implicitly or explicitly in the study of many categorical problems. For example, linear monoidal categories are enriched over the symmetric monoidal category of vector spaces. However, monoidal categories enriched over {\em braided} monoidal categories are almost vacant in the literature. It was not until recently that a definition was written down in \cite{bm,MP}. This delay is partly because categories enriched over braided monoidal categories behave poorly under Cartesian product \cite{js2}, which is one of the fundamental constructions in category theory.

In fact, the notion of a monoidal category enriched over a braided monoidal category is not as poor as it first looks. We show that not only one is able to generalize the Drinfeld center of a monoidal category to that of an enriched monoidal category (see Definition \ref{def:center}), but also this generalization shares many nice properties with the ordinary one, for example, it is an enriched braided monoidal category (see Theorem \ref{thm:center} and Definition \ref{def:braid}).

More importantly, this notion leads to a positive answer to the following question: given a modular tensor category $\CC$, is there any mathematical object whose ``center'' is $\CC$? This question is crucial to the study of 2+1D TQFT such as Chern-Simons theory, Reshetikhin-Turaev extended TQFT \cite{He1,He2,zheng} and topological orders with gapless edges \cite{kz}. We show that a modular tensor category (more generally, a nondegenerate braided fusion category) $\CC$ can be realized in a canonical way as the Drinfeld center of a self-enriched monoidal category (see Corollary \ref{cor:mtc}). We also give a generalization of this result in Corollary \ref{cor:fusion}, which has an important application in physics \cite{kz}.


\medskip

\noindent {\bf Acknowledgement.} HZ is supported by NSFC under Grant No. 11131008.

\section{Enriched (monoidal) categories}

First, we recall the notion of a (monoidal) category enriched over a (braided) monoidal category. See \cite{Ke,MP} and references therein.
Let $\CB$ be a monoidal category with tensor unit $\one$ and tensor product $\otimes:\CB\times\CB\to\CB$. We denote the identity morphism by $1: x \to x$ for all $x\in \CB$. The notation $\id_x$ is reserved for something else.

\smallskip

A {\em category $\CC^\sharp$ enriched over $\CB$} consists of a set of objects $Ob(\CC^\sharp)$, a hom object $\CC^\sharp(x,y)\in\CB$ for every pair $x,y\in\CC^\sharp$, a morphism (the identity morphism) $\id_x:\one\to\CC^\sharp(x,x)$ for every $x\in\CC^\sharp$ and a morphism (the composition law) $\circ:\CC^\sharp(y,z)\otimes\CC^\sharp(x,y)\to\CC^\sharp(x,z)$ for every triple $x,y,z\in\CC^\sharp$ rendering the following diagrams commutative for $x,y,z,w\in\CC^\sharp$:
$$\xymatrix@R=1.5em@!C=15ex{
  & \CC^\sharp(x,y)\otimes\CC^\sharp(x,x) \ar[rd]^\circ \\
  \CC^\sharp(x,y) \ar[rr]^1 \ar[ru]^{1\otimes\id_x} && \CC^\sharp(x,y), \\
}
$$
$$\xymatrix@R=1.5em@!C=15ex{
  & \CC^\sharp(y,y)\otimes\CC^\sharp(x,y) \ar[rd]^\circ \\
  \CC^\sharp(x,y) \ar[rr]^1 \ar[ru]^{\id_y\otimes 1} && \CC^\sharp(x,y), \\
}
$$
$$\xymatrix{
  \CC^\sharp(z,w)\otimes\CC^\sharp(y,z)\otimes\CC^\sharp(x,y) \ar[r]^-{1\otimes\circ} \ar[d]_{\circ\otimes 1} & \CC^\sharp(z,w)\otimes\CC^\sharp(x,z) \ar[d]^\circ \\
  \CC^\sharp(y,w)\otimes\CC^\sharp(x,y) \ar[r]^-\circ & \CC^\sharp(x,w). \\
}
$$
The {\em underlying category} of $\CC^\sharp$ is a category $\CC$ which has the same objects as $\CC^\sharp$ and has the sets of morphisms defined by $\CC(x,y) = \CB(\one,\CC^\sharp(x,y))$. A morphism in $\CC$ is also referred to as a morphism in $\CC^\sharp$.


\smallskip

An {\em enriched functor} $F:\CC^\sharp\to\CD^\sharp$ between two categories enriched over $\CB$ consists of a map $F:Ob(\CC^\sharp)\to Ob(\CD^\sharp)$ and a morphism $F: \CC^\sharp(x,y)\to\CD^\sharp(F(x),F(y))$ for every pair $x,y\in\CC^\sharp$ such that the following diagrams commute for $x,y,z\in\CC^\sharp$:
$$
\xymatrix@R=1em{
  & \one \ar[ld]_{\id_x} \ar[rd]^{\id_{F(x)}} \\
  \CC^\sharp(x,x) \ar[rr]^-F && \CD^\sharp(F(x),F(x)), \\
}
$$
$$\xymatrix{
  \CC^\sharp(y,z)\otimes\CC^\sharp(x,y) \ar[r]^-\circ \ar[d]_{F\otimes F} & \CC^\sharp(x,z) \ar[d]^F \\
 \CD^\sharp(F(y),F(z))\otimes\CD^\sharp(F(x),F(y)) \ar[r]^-\circ & \CD^\sharp(F(x),F(z)). \\
}
$$
An enriched functor $F:\CC^\sharp\to\CD^\sharp$ induces a functor between the underlying categories $F:\CC\to\CD$.

\smallskip

An {\em enriched natural transformation} $\xi:F\to G$ between two enriched functors $F,G:\CC^\sharp\to\CD^\sharp$ is a natural transformation between the underlying functors such that the following diagram commutes for $x,y\in\CC^\sharp$:
$$
\xymatrix@!C=25ex{
  \CC^\sharp(x,y) \ar[r]^-{G} \ar[d]_{F} & \CD^\sharp(G(x),G(y)) \ar[d]^{-\circ\xi_x} \\
  \CD^\sharp(F(x),F(y)) \ar[r]^-{\xi_y\circ-} & \CD^\sharp(F(x),G(y)). \\
}
$$

\begin{exam} \label{exam:B1}
Suppose $\CB$ is rigid. Then $\CB$ is canonically promoted to a self-enriched category $\CB^\sharp$: $Ob(\CB^\sharp)=Ob(\CB)$,
$\CB^\sharp(x,y) = y\otimes x^*$ where $x^*$ is the left dual of $x$,
the composition law $\circ:(z\otimes y^*)\otimes(y\otimes x^*)\to z\otimes x^*$ is induced by the counit map $v_y:y^*\otimes y\to\one$, and $\id_x$ is given by the unit map $u_x: \one\to x\otimes x^*$.
\end{exam}


Now we assume $\CB$ is a braided monoidal category with braiding $c_{x,y}:x\otimes y\to y\otimes x$.

\smallskip

Let $\CC^\sharp,\CD^\sharp$ be categories enriched over $\CB$. The {\em Cartesian product} $\CC^\sharp\times\CD^\sharp$  is a category enriched over $\CB$ defined as follows:
\begin{itemize}
  \item $Ob(\CC^\sharp\times\CD^\sharp)=Ob(\CC^\sharp)\times Ob(\CD^\sharp)$;
  \item $(\CC^\sharp\times\CD^\sharp)((x,y),(x',y')) = \CC^\sharp(x,x')\otimes\CD^\sharp(y,y')$;
  \item $\id_{(x,y)}:\one\to(\CC^\sharp\times\CD^\sharp)((x,y),(x,y))$ is given by $\one\simeq\one\otimes\one\xrightarrow{\id_x\otimes\id_y}\CC^\sharp(x,x)\otimes\CD^\sharp(y,y)$;
  \item the composition law
\begin{eqnarray*}
& \circ: (\CC^\sharp\times\CD^\sharp)((x',y'),(x'',y'')) \otimes (\CC^\sharp\times\CD^\sharp)((x,y),(x',y')) \\
& \to (\CC^\sharp\times\CD^\sharp)((x,y),(x'',y''))
\end{eqnarray*}
is given by
\begin{eqnarray*}
& & \CC^\sharp(x',x'')\otimes\CD^\sharp(y',y'')\otimes\CC^\sharp(x,x')\otimes\CD^\sharp(y,y') \\
& \xrightarrow{1\otimes c^{-1}\otimes 1} & \CC^\sharp(x',x'')\otimes\CC^\sharp(x,x')\otimes\CD^\sharp(y',y'')\otimes\CD^\sharp(y,y') \\
& \xrightarrow{~~\circ\otimes\circ~~} & \CC^\sharp(x,x'')\otimes\CD^\sharp(y,y'').
\end{eqnarray*}
\end{itemize}
If $F:\CC^\sharp\to\CD^\sharp$ and $F':\CC'^\sharp\to\CD'^\sharp$ are enriched functors, there is an obvious enriched functor $F\times F':\CC^\sharp\times\CC'^\sharp\to\CD^\sharp\times\CD'^\sharp$.

\begin{rem}
The underlying category of $\CC^\sharp\times\CD^\sharp$ has the same objects as $\CC\times\CD$, but may have quite different morphisms. Nevertheless, there is an obvious functor from $\CC\times\CD$ to the underlying category of $\CC^\sharp\times\CD^\sharp$, therefore an enriched functor $F:\CC^\sharp\times\CD^\sharp\to\CE^\sharp$ induces a functor $F:\CC\times\CD\to\CE$.
\end{rem}

We have a canonical equivalence $(\CC^\sharp\times\CD^\sharp)\times\CE^\sharp \simeq \CC^\sharp\times(\CD^\sharp\times\CE^\sharp)$ for enriched categories $\CC^\sharp,\CD^\sharp,\CE^\sharp$. However, $\CC^\sharp\times\CD^\sharp \not\simeq \CD^\sharp\times\CC^\sharp$ in general unless $\CB$ is symmetric. So, the categories enriched over $\CB$ together with the enriched functors and enriched natural isomorphisms form a monoidal 2-category $\Cat^\CB$ (in which 2-morphisms are all invertible). Then we have the notions of associative algebras, modules and duality in $\Cat^\CB$ (see, for example, \cite{Lu}), but there is no obvious notion of commutativity. Unwinding the definition, we see that an associative algebra in $\Cat^\CB$ is nothing but an enriched monoidal category defined below. (Similarly, one defines enriched module categories, tensor product of enriched module categories, etc.)

\begin{defn} \label{def:emc}
A {\em monoidal category enriched over $\CB$} consists of a category $\CC^\sharp$ enriched over $\CB$, an object $\one_{\CC^\sharp}\in\CC^\sharp$, an enriched functor $\otimes:\CC^\sharp\times\CC^\sharp\to \CC^\sharp$, and enriched natural isomorphisms $\lambda:\one_{\CC^\sharp}\otimes- \to \Id_{\CC^\sharp}$, $\rho: -\otimes\one_{\CC^\sharp} \to \Id_{\CC^\sharp}$, $\alpha: (-\otimes-)\otimes- \to -\otimes(-\otimes-)$ such that the underlying category $\CC$, the object $\one_{\CC^\sharp}$, the underlying functor $\otimes:\CC\times\CC\to\CC$ and the natural isomorphisms $\lambda,\rho,\alpha$ define an ordinary monoidal category (in another word, $\lambda,\rho,\alpha$ satisfy the triangle axiom and the pentagon axiom of a monoidal category).

An {\em enriched monoidal functor} between two enriched monoidal categories $\CC^\sharp,\CD^\sharp$ consists of an enriched functor $F:\CC^\sharp\to\CD^\sharp$, an isomorphism $\mu:F(\one_{\CC^\sharp})\to\one_{\CD^\sharp}$ and an enriched natural isomorphism $\nu:F(-)\otimes F(-) \to F(-\otimes-)$ such that the underlying functor $F:\CC\to\CD$ together with $\mu,\nu$ defines an ordinary monoidal functor.

An {\em enriched monoidal natural transformation} $\xi:F\to G$ between two enriched monoidal functors is an enriched natural transformation which also defines an ordinary monoidal natural transformation between the underlying monoidal functors.
\end{defn}

A notion of a monoidal category enriched over a duoidal category was introduced in \cite{bm}. An enriched monoidal category is {\em strict} if $\lambda,\rho,\alpha$ are identity; this recovers \cite[Definition 2.1]{MP}. The MacLane strictness theorem is also true in the enriched setting:


\begin{prop}
Let $\CC^\sharp$ be an enriched monoidal category. There is an enriched monoidal equivalence $\CC^\sharp\simeq\CD^\sharp$ where $\CD^\sharp$ is a strict enriched monoidal category.
\end{prop}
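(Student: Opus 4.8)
The plan is to mimic the classical proof of MacLane's strictness theorem, replacing the usual category of words by a $\CB$-enriched version. Fix the left-bracketing of objects of $\CC^\sharp$: for a finite word $\vec x=(x_1,\dots,x_n)$ of objects of $\CC^\sharp$ put $L\vec x=(\cdots(x_1\otimes x_2)\otimes\cdots)\otimes x_n$ (an object of $\CC^\sharp$, since $\otimes$ is an enriched functor), with $L$ of the empty word equal to $\one_{\CC^\sharp}$. Define a $\CB$-enriched category $\CD^\sharp$ whose objects are the finite words of objects of $\CC^\sharp$, with $\CD^\sharp(\vec x,\vec y):=\CC^\sharp(L\vec x,L\vec y)$ and with identities and composition inherited verbatim from $\CC^\sharp$; the enriched category axioms are then immediate. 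The object assignment $\vec x\mapsto L\vec x$ and the identity maps on hom objects constitute an enriched functor $L\colon\CD^\sharp\to\CC^\sharp$ that is the identity on hom objects and surjective on objects (it sends the length-one word $(x)$ to $x$), with an evident quasi-inverse $\iota\colon x\mapsto(x)$; so $L$ is an enriched equivalence.

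Next I would equip $\CD^\sharp$ with a strict enriched monoidal structure: $\one_{\CD^\sharp}$ is the empty word, $\vec x\otimes\vec y$ is concatenation of words (strictly associative and strictly unital), the associativity and unit constraints are declared to be identity morphisms, and the enriched functor $\otimes\colon\CD^\sharp\times\CD^\sharp\to\CD^\sharp$ acts on hom objects as the action of the enriched functor $\otimes$ of $\CC^\sharp$,
\[
\CC^\sharp(L\vec x,L\vec x')\otimes\CC^\sharp(L\vec y,L\vec y')\xrightarrow{\ \otimes\ }\CC^\sharp(L\vec x\otimes L\vec y,\ L\vec x'\otimes L\vec y'),
\]
followed by conjugation (pre- and post-composition in $\CC^\sharp$) by the canonical coherence isomorphism $\phi_{\vec x,\vec y}\colon L\vec x\otimes L\vec y\xrightarrow{\sim}L(\vec x\otimes\vec y)$ of the underlying ordinary monoidal category $\CC$, which is built from $\alpha,\lambda,\rho$ and is unique by MacLane coherence. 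Finally I would promote $L$ to an enriched monoidal functor by taking $\mu\colon L(\one_{\CD^\sharp})=\one_{\CC^\sharp}\to\one_{\CC^\sharp}$ to be the identity and $\nu_{\vec x,\vec y}:=\phi_{\vec x,\vec y}$.

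The verifications then split into three routine parts. (i) $\otimes\colon\CD^\sharp\times\CD^\sharp\to\CD^\sharp$ is a well-defined enriched functor: compatibility with identities is clear, and compatibility with composition reduces, after the conjugating isomorphisms $\phi$ cancel, to the enriched functoriality of $\otimes$ on $\CC^\sharp$ together with the purely formal fact that in any $\CB$-enriched category conjugation by an isomorphism of the underlying category commutes with the composition morphism; the braiding $c$ of $\CB$ enters only through the composition law of the Cartesian product, exactly as it already does for $\otimes$ on $\CC^\sharp$, so no new coherence is needed. (ii) The constraints are genuine identities: on objects both sides agree literally, and on hom objects the equality of the two composites built from $\otimes$ and the $\phi$'s is precisely MacLane coherence in $\CC$ (e.g.\ $\phi_{\vec x\otimes\vec y,\,\vec z}\circ(\phi_{\vec x,\vec y}\otimes 1)=\phi_{\vec x,\,\vec y\otimes\vec z}\circ(1\otimes\phi_{\vec y,\vec z})\circ\alpha_{L\vec x,L\vec y,L\vec z}$ and its unital analogues), so $\CD^\sharp$ is a strict enriched monoidal category. (iii) $(L,\mu,\nu)$ is an enriched monoidal functor: the enriched naturality of $\nu=\phi$ is tautological, since $\otimes$ on the hom objects of $\CD^\sharp$ was \emph{defined} to be conjugation by $\phi$, and the hexagon and triangle axioms are equalities of morphisms of $\CC$ that hold by coherence in $\CC$. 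As $L$ is moreover fully faithful and surjective on objects, it is an enriched monoidal equivalence $\CC^\sharp\simeq\CD^\sharp$.

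The main obstacle is organisational rather than conceptual: one must keep careful track that every coherence isomorphism $\phi$ occurring in the argument is a morphism of the \emph{underlying} category $\CC$ — so that its manipulation is governed by the ordinary MacLane coherence theorem — and that the $c^{-1}$-twist built into the Cartesian product of enriched categories is only ever applied to pairs of hom objects of $\CC^\sharp$ in the same pattern in which it already appears in the enriched functor $\otimes$ of $\CC^\sharp$. Once the notation is arranged so that these two points are manifest, each individual check is an instance either of the enriched functoriality of $\otimes$ on $\CC^\sharp$ or of strictness/coherence for the ordinary monoidal category $\CC$.
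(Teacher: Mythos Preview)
Your proposal is correct and is essentially an explicit unpacking of the paper's own proof. The paper simply invokes the ordinary MacLane strictness theorem as a black box to obtain a strict monoidal category $\CD$ with a monoidal equivalence $F:\CD\to\CC$, then enriches $\CD$ by declaring $\CD^\sharp(x,y):=\CC^\sharp(F(x),F(y))$ and transporting the composition law and the enriched tensor product from $\CC^\sharp$; you carry out the very same construction with the concrete word-category model $F=L$, and in addition spell out the conjugation-by-$\phi$ formula for the enriched tensor product and the coherence checks that the paper leaves to the reader.
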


\begin{proof}
According to the MacLane strictness theorem, there is a monoidal equivalence $F:\CD\to\CC$ where $\CD$ is a strict monoidal category. To lift $\CD$ to a strict enriched monoidal category $\CD^\sharp$, we set $\CD^\sharp(x,y) = \CC^\sharp(F(x),F(y))$ and let the composition law and the enriched structure of the tensor product be induced from those of $\CC^\sharp$. Then $F$ automatically lifts to an enriched monoidal equivalence.
\end{proof}

\begin{exam} \label{exam:B2}
Suppose $\CB$ is rigid. Then $\CB$ can be canonically promoted to a monoidal category $\CB^\sharp$ enriched over $\CB$ \cite[Section 2.3]{MP}. In fact, one needs to promote $\otimes:\CB\times\CB\to\CB$ to a well-defined enriched functor. It turns out that one should take $\otimes:(\CB^\sharp\times\CB^\sharp)((x,y),(x',y')) \to \CB^\sharp(x\otimes y,x'\otimes y')$ to be $1 \otimes c_{x^*,y'\otimes y^*}: (x'\otimes x^*)\otimes(y'\otimes y^*) \to (x'\otimes y')\otimes(x\otimes y)^*$.
\end{exam}

\section{Enriched braided/symmetric monoidal categories}

Keep the assumption that $\CB$ is a braided monoidal category with braiding $c_{x,y}:x\otimes y\to y\otimes x$.

\begin{prop} \label{prop:comm-equiv}
Let $\CC^\sharp,\CD^\sharp,\CE^\sharp$ be categories enriched over $\CB$. The following conditions are equivalent for an enriched functor $F:\CC^\sharp\times\CD^\sharp\to\CE^\sharp$:
\begin{enumerate}
\item The assignment $F^\rev(y,x)=F(x,y)$ and the composite morphism
\begin{align*}
F^\rev:~ & (\CD^\sharp\times\CC^\sharp)((y,x),(y',x')) = \CD^\sharp(y,y')\otimes\CC^\sharp(x,x') \\
& \xrightarrow{~c~} \CC^\sharp(x,x')\otimes\CD^\sharp(y,y') \xrightarrow{F} \CE^\sharp(F^\rev(y,x),F^\rev(y',x'))
\end{align*}
define an enriched functor $F^\rev:\CD^\sharp\times\CC^\sharp\to\CE^\sharp$.
\item The following diagram commutes for $x,x'\in\CC^\sharp$ and $y,y'\in\CD^\sharp$:
$$
\xymatrix@!C=15ex{
  & \CC^\sharp(x,x')\otimes\CD^\sharp(y,y') \ar[rd]^F \\
  \CC^\sharp(x,x')\otimes\CD^\sharp(y,y') \ar[rr]^F \ar[ru]^{c^2} && \CE^\sharp(F(x,y),F(x',y')). \\
}
$$
\end{enumerate}
\end{prop}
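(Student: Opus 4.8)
The plan is to unwind the definition of an enriched functor on a Cartesian product and compare the two ways of composing. Recall that the composition law in $\CC^\sharp\times\CD^\sharp$ involves the inverse braiding $c^{-1}$ to interleave the $\CC^\sharp$ and $\CD^\sharp$ hom objects, while the composition law in $\CD^\sharp\times\CC^\sharp$ uses $c^{-1}$ with the roles of the two factors swapped. The morphisms $F^\rev$ on hom objects are defined by precomposing $F$ with a single braiding $c$. So the whole statement reduces to checking that the compatibility of $F^\rev$ with composition, after cancelling the braidings that appear on both sides, is \emph{equivalent} to the compatibility of $F$ with composition twisted by the double braiding $c^2$ as displayed in condition (2). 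The unit axiom for $F^\rev$ is automatic since $c$ intertwines the two unit morphisms $\id_{(x,y)}$ and $\id_{(y,x)}$ by naturality of the braiding, so the content is entirely in the composition axiom.

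First I would write out, for objects $x,x'\in\CC^\sharp$, $y,y'\in\CD^\sharp$ and a third pair $x'',y''$, the composition-compatibility square that $F^\rev$ must satisfy: on the source we have $\CD^\sharp(y',y'')\otimes\CC^\sharp(x',x'')\otimes\CD^\sharp(y,y')\otimes\CC^\sharp(x,x')$, and we must show that applying ``first the composition law of $\CD^\sharp\times\CC^\sharp$, then $F^\rev$'' agrees with ``first $F^\rev\otimes F^\rev$, then the composition law of $\CE^\sharp$''. Next I would expand both composites into string-diagram form in $\CB$, using the hexagon axioms to move braidings past tensor products and the naturality of $c$ to slide $F$ (applied on hom objects) past braidings. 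After these manipulations, all the ``structural'' braidings coming from the two Cartesian-product composition laws and from the definition of $F^\rev$ cancel in pairs, and one is left with the assertion that $F$ itself satisfies its composition-compatibility square \emph{provided} one inserts the endomorphism $c^2$ of $\CC^\sharp(x,x')\otimes\CD^\sharp(y,y')$ in the appropriate spot --- which is exactly the diagram in (2). Conversely, the same chain of rewrites run backwards shows (2) $\Rightarrow$ (1).

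The main obstacle I anticipate is purely bookkeeping: keeping track of which braiding is $c$ versus $c^{-1}$, and ensuring that the hexagon identities are applied with the correct orientation, since $\CB$ is only braided (not symmetric) so $c^2\ne 1$ in general and the distinction between $c$ and $c^{-1}$ genuinely matters. It is worth doing this carefully on a string diagram rather than in line: one draws the four incoming strands, the two composition-law nodes, and the braiding crossings, and checks that the two sides differ precisely by a full positive twist $c^2$ on the bundle $\CC^\sharp(x,x')\otimes\CD^\sharp(y,y')$. Once the diagrammatic calculation is set up correctly, the equivalence falls out immediately because every step in the reduction is reversible. I would also remark that the $c^2$ in (2) is canonical --- it does not depend on which composition triple $(x'',y'')$ one uses --- which is why the single commuting square stated in (2) suffices to control all the composition-compatibility squares simultaneously.
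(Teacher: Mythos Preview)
Your overall plan is sound: the unit axiom for $F^\rev$ is automatic, and unwinding the composition axiom for $F^\rev$ (using that $F$ itself is already an enriched functor) does reduce condition (1) to the statement that the two composites
\[
\CC^\sharp(x',x'')\otimes\CD^\sharp(y',y'')\otimes\CC^\sharp(x,x')\otimes\CD^\sharp(y,y') \xrightarrow{1\otimes c^{\pm1}\otimes 1} \cdots \xrightarrow{\circ\otimes\circ} \cdots \xrightarrow{F} \CE^\sharp(F(x,y),F(x'',y''))
\]
agree---exactly the paper's reformulation. But the passage between this four-strand condition and the two-strand condition (2) is \emph{not} a chain of reversible rewrites, and this is where your sketch has a gap.

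For $(1)\Rightarrow(2)$ the paper \emph{specializes}: set $x'=x''$ and $y=y'$ and precompose with $\id_{x'}$ and $\id_y$ in the outer slots. This collapses the four-strand condition to the two-strand one, but it is not an invertible manipulation. Consequently your proposal to obtain $(2)\Rightarrow(1)$ by ``running the same chain of rewrites backwards'' cannot work as stated. Concretely, after your reduction the residual $c^2$ sits on the \emph{middle} two strands $\CD^\sharp(y',y'')\otimes\CC^\sharp(x,x')$, straddling the inputs of the two copies of $F$ in $F\otimes F$; neither copy of $F$ can absorb it via (2), and hexagon plus naturality of $c$ alone will not move it into one $F$'s domain. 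The paper's key step for $(2)\Rightarrow(1)$---absent from your outline---is to exploit that $F$ preserves identities and composition in order to insert $\id_{y''}$ and $\id_x$ and thereby refactor the composite through a \emph{threefold} product $F\otimes F\otimes F$ followed by composition in $\CE^\sharp$. After this refactoring the $c^{\pm1}$ lies entirely inside the middle $F$'s input $\CD^\sharp(y',y'')\otimes\CC^\sharp(x,x')$, where the reformulated condition (2) (namely $F\circ c = F\circ c^{-1}$) applies directly. You should add this decomposition explicitly; once it is in place, the rest of your argument goes through.
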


\begin{proof}
It is clear that the following diagram commutes:
$$
\xymatrix{
  & \one \ar[ld]_{\id_{(y,x)}} \ar[rd]^{\id_{F^\rev(y,x)}} \\
  (\CD^\sharp\times\CC^\sharp)((y,x),(y,x)) \ar[rr]^-{F^\rev} && \CE^\sharp(F^\rev(y,x),F^\rev(y,x)). \\
}
$$
So, Condition $(1)$ is equivalent to the commutativity of the following diagram:
$$\scriptsize
\xymatrix@!C=55ex{
  (\CD^\sharp\times\CC^\sharp)((y',x'),(y'',x''))\otimes(\CD^\sharp\times\CC^\sharp)((y,x),(y',x')) \ar[r]^-\circ \ar[d]_{F^\rev\otimes F^\rev} & (\CD^\sharp\times\CC^\sharp)((y,x),(y'',x'')) \ar[d]^{F^\rev} \\
 \CE^\sharp(F^\rev(y',x'),F^\rev(y'',x''))\otimes\CE^\sharp(F^\rev(y,x),F^\rev(y',x')) \ar[r]^-\circ & \CE^\sharp(F^\rev(y,x),F^\rev(y'',x'')). \\
}
$$
This amounts to say that the following two composite morphisms coincide
\begin{eqnarray}
&& \CC^\sharp(x',x'')\otimes\CD^\sharp(y',y'')\otimes\CC^\sharp(x,x')\otimes\CD^\sharp(y,y') \nonumber \\
& \xrightarrow{1\otimes c^{\pm1}\otimes 1} & \CC^\sharp(x',x'')\otimes\CC^\sharp(x,x')\otimes\CD^\sharp(y',y'')\otimes\CD^\sharp(y,y') \nonumber \\
& \xrightarrow{\circ\otimes\circ} & \CC^\sharp(x,x'')\otimes\CD^\sharp(y,y'') \label{eqn:comm1} \\
& \xrightarrow{~~F~~} & \CE^\sharp(F(x,y),F(x'',y'')). \nonumber
\end{eqnarray}
On the other hand side, Condition $(2)$ is equivalent to that the following two composite morphisms coincide
\begin{equation*}
\CD^\sharp(y',y'')\otimes\CC^\sharp(x,x') \xrightarrow{c^{\pm1}} \CC^\sharp(x,x')\otimes\CD^\sharp(y',y'') \xrightarrow{~~F~~} \CE^\sharp(F(x,y'),F(x',y'')). 
\end{equation*}
Setting $x'=x''$ and $y=y'$ in \eqref{eqn:comm1}, we see that $(1)\Rightarrow(2)$. Moreover, \eqref{eqn:comm1} is identical to
{\scriptsize
\begin{eqnarray*}
&& \CC^\sharp(x',x'')\otimes\CD^\sharp(y',y'')\otimes\CC^\sharp(x,x')\otimes\CD^\sharp(y,y') \\
& \xrightarrow{1\otimes\id_{y''}\otimes c^{\pm1}\otimes\id_x\otimes 1} & \CC^\sharp(x',x'')\otimes\CD^\sharp(y'',y'')\otimes\CC^\sharp(x,x')\otimes\CD^\sharp(y',y'')\otimes\CC^\sharp(x,x)\otimes\CD^\sharp(y,y') \\
& \xrightarrow{F\otimes F\otimes F} & \CE^\sharp(F(x',y''),F(x'',y''))\otimes\CE^\sharp(F(x,y'),F(x',y''))\otimes\CE^\sharp(F(x,y),F(x,y')) \\
& \xrightarrow{~~\circ~~} & \CE^\sharp(F(x,y),F(x'',y'')).
\end{eqnarray*}
}
Therefore, $(2)\Rightarrow(1)$.
\end{proof}

\begin{defn} \label{def:comm}
We say that an enriched functor $F:\CC^\sharp\times\CD^\sharp\to\CE^\sharp$ is {\em commutative} if it satisfies the equivalent conditions from Proposition \ref{prop:comm-equiv}.
\end{defn}

\begin{rem}
(1) By definition, if $F:\CC^\sharp\times\CD^\sharp\to\CE^\sharp$ is a commutative enriched functor, $F^\rev$ is also commutative and we have $(F^\rev)^\rev = F$.

(2) If $\xi:F\to G$ is an enriched natural transformation between commutative enriched functors $F,G:\CC^\sharp\times\CD^\sharp\to\CE^\sharp$, then $\xi$ defines an enriched natural transformation $\xi^\rev:F^\rev\to G^\rev$.

(3) If $\CC^\sharp$ is an enriched monoidal category with a commutative tensor product $\otimes$, then $\otimes^\rev$ also provides an enriched monoidal structure on $\CC^\sharp$.

(4) If $\CB$ is symmetric, any enriched functor $F:\CC^\sharp\times\CD^\sharp\to\CE^\sharp$ is commutative.
\end{rem}

One way to define an enriched braided/symmetric monoidal category is as follows.

\begin{defn} \label{def:braid}
An {\em enriched braided monoidal category} consists of an enriched monoidal category $\CC^\sharp$ such that the tensor product $\otimes:\CC^\sharp\times\CC^\sharp\to\CC^\sharp$ is commutative, as well as an enriched natural isomorphism $\beta:\otimes\to\otimes^\rev$ such that $\beta$ defines a braiding for the underlying monoidal category $\CC$.

An {\em enriched braided monoidal functor} $F:\CC^\sharp\to\CD^\sharp$ between enriched braided monoidal categories $\CC^\sharp,\CD^\sharp$ is an enriched monoidal functor such that the underlying monoidal functor $F:\CC\to\CD$ is a braided monoidal functor.

An {\em enriched symmetric monoidal category} is an enriched braided monoidal category whose underlying braided monoidal category is symmetric (in another word, $\beta^\rev\circ\beta=\Id_\otimes$). An {\em enriched symmetric monoidal functor} between enriched symmetric monoidal categories is simply an enriched braided monoidal functor.
\end{defn}

\begin{exam} \label{exam:single-obj}
Let $A$ be a commutative algebra in $\CB$. We have a strict symmetric monoidal category $\CC^\sharp$ enriched over $\CB$: it has a single object $\one_{\CC^\sharp}$, $\CC^\sharp(\one_{\CC^\sharp},\one_{\CC^\sharp})=A$, the composition law $\circ:A\otimes A\to A$ is the multiplication of $A$, $\id_{\one_{\CC^\sharp}}:\one\to A$ is the unit of $A$, and the morphism $\otimes:A\otimes A\to A$ is also the multiplication of $A$. Conversely, any strict enriched monoidal category with a single object arises in this way. Therefore, the Cartesian product $\CC^\sharp\times\CC^\sharp$ does not admit an enriched monoidal structure unless the algebra $A\otimes A$ in $\CB$ is commutative.
\end{exam}

\section{Drinfeld center}

In what follows, we assume that $\CB$ is a braided monoidal category satisfying the following condition:
\begin{enumerate}
\renewcommand\theenumi{$*$}
\item\label{item:star}
$\CB$ admits equalizers and the intersection of arbitrary many subobjects of any object $x\in\CB$ exists.
\end{enumerate}
The Drinfeld center \cite{Ma,JS} of a monoidal category has a straightforward generalization:

\begin{defn}  \label{def:half-braiding}
Let $\CC^\sharp$ be a monoidal category enriched over $\CB$. A {\em half-braiding} for an object $x\in\CC^\sharp$ is an enriched natural isomorphism $b_x:x\otimes-\to-\otimes x$ between enriched endo-functors of $\CC^\sharp$ such that it defines a half-braiding in the underlying monoidal category $\CC$.
\end{defn}

\begin{defn} \label{def:center}
The {\em Drinfeld center} of $\CC^\sharp$ is a monoidal category $Z(\CC^\sharp)$ enriched over $\CB$ that is defined as follows:
\begin{itemize}
  \item an object is a pair $(x,b_x)$ where $x\in\CC^\sharp$ and $b_x$ is a half-braiding for $x$;
  \item $Z(\CC^\sharp)((x,b_x),(y,b_y))$ is the intersection of the equalizers of the diagrams $\CC^\sharp(x,y) \rightrightarrows \CC^\sharp(x\otimes z,z\otimes y)$ depicted below for all $z\in\CC^\sharp$
\begin{equation} \label{diag:equalizer}
\raisebox{2em}{\xymatrix@!C=25ex{
  \CC^\sharp(x,y) \ar[r]^-{\otimes\circ(\id_z\otimes 1)} \ar[d]_{\otimes\circ(1\otimes\id_z)} & \CC^\sharp(z\otimes x,z\otimes y) \ar[d]^{-\circ b_{x,z}} \\
  \CC^\sharp(x\otimes z,y\otimes z) \ar[r]^-{b_{y,z}\circ-} & \CC^\sharp(x\otimes z,z\otimes y); \\
}}
\end{equation}
\item the identity morphisms, the composition law and the enriched monoidal structure are induced from those of $\CC^\sharp$ (see the proof of Proposition \ref{prop:center}).
\end{itemize}
\end{defn}

It is routine to prove the following proposition. For reader's convenience, we provide some details of the proof.

\begin{prop} \label{prop:center}
The Drinfeld center $Z(\CC^\sharp)$ of $\CC^\sharp$ is a well-defined enriched monoidal category. The underlying category of $Z(\CC^\sharp)$ is a full subcategory of $Z(\CC)$.
\end{prop}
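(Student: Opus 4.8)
The plan is to verify in turn that each structural ingredient of Definition~\ref{def:center} is well-defined and satisfies the enriched (monoidal) category axioms, reducing wherever possible to the already-known statement for the ordinary Drinfeld center $Z(\CC)$. First I would record the abstract existence of the hom objects: condition \eqref{item:star} guarantees that each equalizer of the pair of maps $\CC^\sharp(x,y)\rightrightarrows\CC^\sharp(x\otimes z,z\otimes y)$ in \eqref{diag:equalizer} exists as a subobject of $\CC^\sharp(x,y)$, and that the intersection over all $z\in\CC^\sharp$ exists; this gives a canonical monomorphism $\iota:Z(\CC^\sharp)((x,b_x),(y,b_y))\hookrightarrow\CC^\sharp(x,y)$ in $\CB$.

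Next I would construct the identity morphisms and composition. For $\id_{(x,b_x)}$ one checks that the ordinary identity $\id_x:\one\to\CC^\sharp(x,x)$ equalizes the two maps in \eqref{diag:equalizer} for every $z$ — this is essentially the statement that the identity of $\CC$ lies in $Z(\CC)$ — so by the universal property it factors uniquely through $\iota$. For composition one shows that the composite $\CC^\sharp(y,z)\otimes\CC^\sharp(x,y)\xrightarrow{\circ}\CC^\sharp(x,z)$, restricted along $Z(\CC^\sharp)(y,z)\otimes Z(\CC^\sharp)(x,y)\hookrightarrow\CC^\sharp(y,z)\otimes\CC^\sharp(x,y)$, lands inside the subobject $Z(\CC^\sharp)(x,z)$; again this factorization is forced by the universal property of the intersection of equalizers, and the needed compatibility is exactly the diagrammatic identity that makes composites of central morphisms central in $Z(\CC)$. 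Since $\iota$ is a mono, the unit and associativity axioms for $Z(\CC^\sharp)$ follow from those for $\CC^\sharp$ by postcomposing with $\iota$ and cancelling.

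For the monoidal structure I would take the tensor product on objects to be $(x,b_x)\otimes(y,b_y)=(x\otimes y, b_{x\otimes y})$ with the standard composite half-braiding built from $b_x$ and $b_y$ (using that $\otimes$ is an enriched functor to make $b_{x\otimes y}$ an enriched natural isomorphism), and on hom objects the enriched functor $\otimes$ of $\CC^\sharp$ restricted along the monos $\iota$; one must check this restriction factors through $Z(\CC^\sharp)$ of the tensored objects, which is again a universal-property factorization whose hypothesis is the centrality computation already valid in $Z(\CC)$. The structure isomorphisms $\lambda,\rho,\alpha$ and the unit object $(\one_{\CC^\sharp}, \text{trivial half-braiding})$ are inherited; since $\iota$ identifies the underlying category of $Z(\CC^\sharp)$ with the full subcategory of $Z(\CC)$ on those $(x,b_x)$ for which $b_x$ comes from an \emph{enriched} natural isomorphism, the triangle and pentagon axioms for $Z(\CC^\sharp)$ are inherited from $Z(\CC)$, and the last sentence of the proposition is immediate from the construction $Z(\CC^\sharp)(x,y)=\CB(\one,-)$ applied to the defining mono.

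The main obstacle — though it is bookkeeping rather than a genuine difficulty — is checking that the two maps in \eqref{diag:equalizer} are honestly well-defined morphisms in $\CB$ (i.e. that $\otimes\circ(\id_z\otimes1)$ and the rest assemble correctly using the enriched functoriality of $\otimes$ and the half-braiding data), and then that the various factorizations through intersections of equalizers are mutually compatible, so that $\circ$, $\otimes$, $\id$ on $Z(\CC^\sharp)$ are simultaneously the restrictions of the corresponding structures on $\CC^\sharp$. Once one observes that $\iota$ is a monomorphism and that every axiom to be verified is an equation between morphisms \emph{into} some $\CC^\sharp(-,-)$ (or a diagram postcomposed with $\iota$), each such equation reduces to the already-established axiom in $\CC^\sharp$ or in the ordinary center $Z(\CC)$, which is why the proof is ``routine''.
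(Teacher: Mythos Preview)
Your proposal is correct and follows essentially the same route as the paper: factor $\id_x$, composition, and the enriched tensor product through the defining intersection of equalizers, then inherit $\lambda,\rho,\alpha$ once the underlying category is identified with a full subcategory of $Z(\CC)$. The paper merely makes explicit the two commutative diagrams witnessing the factorizations for $\circ$ and $\otimes$ that you describe in words; your observation that monicity of $\iota$ reduces every axiom to the corresponding one in $\CC^\sharp$ is exactly the mechanism the paper uses implicitly.
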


\begin{proof}
The identity morphism $\id_x: \one \to \CC^\sharp(x,x)$ equalizes the diagram $\CC^\sharp(x,x) \rightrightarrows \CC^\sharp(x\otimes z,z\otimes x)$ for $(x,b_x)\in Z(\CC^\sharp)$ and $z\in \CC^\sharp$ because $b_x$ defines a half-braiding in the underlying monoidal category $\CC$. Therefore, $\id_x$ factors though $Z(\CC^\sharp)((x,b_x),(x,b_x))$.
Moreover, we have a commutative diagram:
$$\small\xymatrix{
  Z(\CC^\sharp)((w,b_w),(y,b_y))\otimes Z(\CC^\sharp)((x,b_x),(w,b_w)) \ar[r] \ar[d] & \CC^\sharp(z\otimes w,z\otimes y)\otimes\CC^\sharp(z\otimes x,z\otimes w) \ar[d]^{1\otimes(-\circ b_{x,z})} \\
  \CC^\sharp(w\otimes z,y\otimes z)\otimes\CC^\sharp(x\otimes z,w\otimes z) \ar[r]^-{(b_{y,z}\circ-\circ b_{w,z}^{-1})\otimes(b_{w,z}\circ-)} & \CC^\sharp(z\otimes w,z\otimes y)\otimes\CC^\sharp(x\otimes z,z\otimes w)
}
$$
which implies that the composite morphism
$$Z(\CC^\sharp)((w,b_w),(y,b_y))\otimes Z(\CC^\sharp)((x,b_x),(w,b_w)) \to \CC^\sharp(w,y)\otimes\CC^\sharp(x,w) \xrightarrow{\circ} \CC^\sharp(x,y)$$
equalizes $\CC^\sharp(x,y) \rightrightarrows \CC^\sharp(x\otimes z,z\otimes y)$ hence factors through $Z(\CC^\sharp)((x,b_x),(y,b_y))$. This shows that the identity morphisms and composition law of $\CC^\sharp$ induce those of $Z(\CC^\sharp)$, rendering $Z(\CC^\sharp)$ a well-defined enriched category.

We have a commutative diagram:
$$\xymatrix{
  \CC^\sharp(z\otimes x,z\otimes y)\otimes\CC^\sharp(x',y') \ar[r]^-{(b_{y,z}^{-1}\circ-\circ b_{x,z})\otimes1} & \CC^\sharp(x\otimes z,y\otimes z)\otimes\CC^\sharp(x',y') \ar[d]^\otimes \\
  Z(\CC^\sharp)((x,b_x),(y,b_y))\otimes Z(\CC^\sharp)((x',b_{x'}),(y',b_{y'})) \ar[u] \ar[d] \ar[ru] \ar[rd] & \CC^\sharp(x\otimes z\otimes x',y\otimes z\otimes y') \\
  \CC^\sharp(x,y)\otimes\CC^\sharp(x'\otimes z,y'\otimes z) \ar[r]^-{1\otimes(b_{y',z}\circ-\circ b_{x',z}^{-1})} & \CC^\sharp(x,y)\otimes\CC^\sharp(z\otimes x',z\otimes y') \ar[u]_\otimes
}
$$
the outer square of which implies that the composite morphism {\small
$$Z(\CC^\sharp)((x,b_x),(y,b_y))\otimes Z(\CC^\sharp)((x',b_{x'}),(y',b_{y'})) \to \CC^\sharp(x,y)\otimes\CC^\sharp(x',y') \xrightarrow{\otimes} \CC^\sharp(x\otimes x', y\otimes y')
$$
}equalizes $\CC^\sharp(x\otimes x', y\otimes y') \rightrightarrows \CC^\sharp(x\otimes x'\otimes z,z\otimes y\otimes y')$ hence factors through $Z(\CC^\sharp)((x\otimes x',b_{x\otimes x'}), (y\otimes y',b_{y\otimes y'}))$. This shows that the enriched functor $\otimes:\CC^\sharp\times\CC^\sharp\to\CC^\sharp$ induces a well-defined enriched functor $\otimes:Z(\CC^\sharp)\times Z(\CC^\sharp)\to Z(\CC^\sharp)$.

Note that $\CB(\one, Z(\CC^\sharp)((x,b_x),(y,b_y)))$ consists of those morphisms in $\CB(\one, \CC^\sharp(x,y))$ equalizing Diagram \eqref{diag:equalizer} for all $z\in\CC^\sharp$, i.e. those morphisms in $\CC(x,y)$ intertwining the half-braidings $b_x$ and $b_y$, i.e. those morphisms in $Z(\CC)((x,b_x),(y,b_y))$. This shows that the underlying category of $Z(\CC^\sharp)$ is a full subcategory of $Z(\CC)$.
As a consequence, the enriched natural isomorphisms $\lambda,\rho,\alpha$ of $\CC^\sharp$ induce those of $Z(\CC^\sharp)$ rendering $Z(\CC^\sharp)$ a well-defined enriched monoidal category.
\end{proof}

\begin{thm} \label{thm:center}
Let $\CC^\sharp$ be a monoidal category enriched over $\CB$.
\begin{enumerate}
\item The composite enriched functors $L: Z(\CC^\sharp)\times\CC^\sharp \to \CC^\sharp\times\CC^\sharp \xrightarrow\otimes \CC^\sharp$ and $R: \CC^\sharp\times Z(\CC^\sharp) \to \CC^\sharp\times\CC^\sharp \xrightarrow\otimes \CC^\sharp$ are commutative.

\item The natural isomorphism $b_{x,y}:x\otimes y\to y\otimes x$ for $x\in Z(\CC^\sharp)$ and $y\in\CC^\sharp$ defines an enriched natural isomorphism $L\simeq R^\rev$.

\item The Drinfeld center $Z(\CC^\sharp)$ is an enriched braided monoidal category in the sense of Definition \ref{def:braid} with the braiding $b_{x,y}:x\otimes y\to y\otimes x$.
\end{enumerate}
\end{thm}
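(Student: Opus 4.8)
The plan is to prove the three parts in order, with part~(1) carrying essentially all of the work and parts~(2) and~(3) following quickly.

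For part~(1), I would invoke Proposition~\ref{prop:comm-equiv}(2): $L$ is commutative precisely when, for all $(x,b_x),(x',b_{x'})\in Z(\CC^\sharp)$ and all $y,y'\in\CC^\sharp$, the composite
$$Z(\CC^\sharp)((x,b_x),(x',b_{x'}))\otimes\CC^\sharp(y,y')\xrightarrow{\ \iota\otimes1\ }\CC^\sharp(x,x')\otimes\CC^\sharp(y,y')\xrightarrow{\ \otimes\ }\CC^\sharp(x\otimes y,x'\otimes y')$$
is unchanged after precomposition with the double braiding $c^{2}$, where $\iota$ denotes the defining subobject inclusion of Definition~\ref{def:center}. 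The idea is to rewrite the second arrow by the interchange law for the enriched functor $\otimes\colon\CC^\sharp\times\CC^\sharp\to\CC^\sharp$, which presents $\otimes$ on hom objects in two ways: once with no braiding of $\CB$ --- assembled from the partial functor $-\otimes y'$ on $\CC^\sharp(x,x')$, from $x\otimes-$ on $\CC^\sharp(y,y')$, and the composition law of $\CC^\sharp$ --- and once from $x'\otimes-$ on $\CC^\sharp(y,y')$, $-\otimes y$ on $\CC^\sharp(x,x')$ and the composition law, now after a single braiding $c$. Substituting one presentation on each side of the desired equality and pushing the braidings of $\CB$ through the relevant naturality squares should reduce the claim to an identity no longer mentioning $\otimes$: a composite built from the composition law of $\CC^\sharp$ and from $(-\otimes y')\circ\iota$ and $x\otimes-$ must absorb a double braiding. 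This is exactly what the equalizer of Definition~\ref{def:center} provides --- spelled out on hom objects it is the enriched naturality of the half-braidings $b_x$ and $b_{x'}$, together with the condition (Diagram~\eqref{diag:equalizer}) that $\iota$ lands among the morphisms intertwining them, applied with $z=y$ and with $z=y'$. The same computation with the two tensor slots exchanged gives the commutativity of $R$.

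For part~(2), once $L$ and $R$ are known commutative, $L^\rev$ and $R^\rev$ are well-defined enriched functors. I would then conjugate the enriched functor $R\colon\CC^\sharp\times Z(\CC^\sharp)\to\CC^\sharp$ by the isomorphisms $b_{x,y}^{-1}\colon y\otimes x\to x\otimes y$; this always yields an enriched functor $G$ (with $G(y,(x,b_x))=x\otimes y$) together with an enriched natural isomorphism $b^{-1}\colon R\Rightarrow G$, no coherence being required. The hom-object identity proved in part~(1) says precisely that $G=L^\rev$, so $b^{-1}\colon R\Rightarrow L^\rev$ is an enriched natural isomorphism of commutative enriched functors; passing to the reversed natural transformation (Remark after Definition~\ref{def:comm}) then gives the desired enriched natural isomorphism $b\colon L\Rightarrow R^\rev$ with components $b_{x,y}$.

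For part~(3), I would restrict $L$, $R^\rev$ and $b$ along the evident functor $Z(\CC^\sharp)\times Z(\CC^\sharp)\to Z(\CC^\sharp)\times\CC^\sharp$: the tensor product of $Z(\CC^\sharp)$ is $L$ precomposed with this functor and postcomposed with the monomorphism $Z(\CC^\sharp)(-,-)\hookrightarrow\CC^\sharp(-,-)$ on hom objects, and $\otimes^\rev$ arises from $R^\rev$ in the same way; so part~(1) shows $\otimes\colon Z(\CC^\sharp)\times Z(\CC^\sharp)\to Z(\CC^\sharp)$ is commutative and part~(2) shows $b$ is an enriched natural isomorphism $\otimes\Rightarrow\otimes^\rev$. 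By Proposition~\ref{prop:center} the underlying category of $Z(\CC^\sharp)$ is a full monoidal subcategory of the ordinary Drinfeld center $Z(\CC)$, closed under tensor and containing the unit, on which $b$ restricts to the standard braiding; in particular its underlying data form a braided monoidal category. Definition~\ref{def:braid} then gives that $(Z(\CC^\sharp),\otimes,b)$ is an enriched braided monoidal category, which completes the proof. I expect the main obstacle to be the hom-object computation in part~(1): tracking the braidings of $\CB$ through the interchange law and the naturality squares, and recognizing the remaining double-braiding absorption as exactly the defining equalizer of $Z(\CC^\sharp)$. As Example~\ref{exam:single-obj} shows, this absorption already fails for the tensor product of a general enriched monoidal category, so the half-braiding hypothesis must be used in an essential way and nothing formal is available once the reduction is made.
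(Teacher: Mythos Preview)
Your proposal is correct and uses the same ingredients as the paper --- the interchange decomposition of $\otimes$ on hom objects into partial functors followed by composition, together with the equalizer condition \eqref{diag:equalizer} applied at $z=y$ and $z=y'$ --- but the organisation differs in a way worth noting. You aim to verify $L\circ c^2=L$ directly, and then in part~(2) to separately establish the identity $G=L^\rev$ (where $G$ is $R$ conjugated by the half-braidings). The paper instead proves two single-braiding relations first: one big diagram shows $L=(b_{x',y'}^{-1}\circ-\circ b_{x,y})\circ R\circ c^{-1}$, and a symmetric one shows $R=(b_{x',y'}\circ-\circ b_{x,y}^{-1})\circ L\circ c^{-1}$. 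Composing the two gives $L\circ c^2=L$ (hence part~(1)), while either one alone is exactly the enriched naturality of $b$ (hence part~(2)). This is slightly more economical: what your part~(2) calls ``the hom-object identity proved in part~(1)'' is not literally the identity $L\circ c^2=L$ you set out to prove there, but rather this intermediate $L$--$R$ relation, so in your arrangement you would either need to extract it from the part~(1) computation or redo a piece of that computation. Nothing is wrong, but the paper's route makes the dependency between (1) and (2) transparent by putting the shared identity front and centre. Part~(3) is handled identically in both.
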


\begin{proof}
$(1)$ We have a commutative diagram for $x,x'\in Z(\CC^\sharp)$ and $y,y'\in\CC^\sharp$:
$$
\xymatrix@!C=35ex{
  Z(\CC^\sharp)(x,x')\otimes\CC^\sharp(y,y') \ar[r]^-{c^{-1}} \ar[d]_{(R\circ(\id_{y'}\otimes 1))\otimes(R\circ(1\otimes\id_x))} \ar@/^.8pc/[rdd]^L \ar@/^1pc/[dd]^>>>>>>{(L\circ(1\otimes\id_{y'}))\otimes(L\circ(\id_x\otimes 1))} & \CC^\sharp(y,y')\otimes Z(\CC^\sharp)(x,x') \ar[d]^R \\
  \CC^\sharp(y'\otimes x,y'\otimes x')\otimes\CC^\sharp(y\otimes x,y'\otimes x) \ar[r]^-\circ \ar[d]_{(b_{x',y'}^{-1}\circ-\circ b_{x,y'})\otimes(b_{x,y'}^{-1}\circ-\circ b_{x,y})} & \CC^\sharp(y\otimes x,y'\otimes x') \ar[d]^{b_{x',y'}^{-1}\circ-\circ b_{x,y}} \\
  \CC^\sharp(x\otimes y',x'\otimes y')\otimes\CC^\sharp(x\otimes y,x\otimes y') \ar[r]^-\circ & \CC^\sharp(x\otimes y,x'\otimes y'), \\
}
$$
where the left triangle is due to the definition of Drinfeld center, the middle triangle and the two squares are clear from definition.
Whence we obtain a commutative diagram:
\begin{equation} \label{eqn:lr1}
\raisebox{2em}{\xymatrix{
  Z(\CC^\sharp)(x,x')\otimes\CC^\sharp(y,y') \ar[r]^-{c^{-1}} \ar[d]_L & \CC^\sharp(y,y')\otimes Z(\CC^\sharp)(x,x') \ar[d]^R \\
  \CC^\sharp(x\otimes y,x'\otimes y') & \ar[l]_-{b_{x',y'}^{-1}\circ-\circ b_{x,y}} \CC^\sharp(y\otimes x,y'\otimes x'). \\
}}
\end{equation}
Similarly, we have a commutative diagram:
\begin{equation} \label{eqn:lr2}
\raisebox{2em}{\xymatrix{
  Z(\CC^\sharp)(x,x')\otimes\CC^\sharp(y,y') \ar[d]_L & \CC^\sharp(y,y')\otimes Z(\CC^\sharp)(x,x') \ar[d]^R \ar[l]_-{c^{-1}} \\
  \CC^\sharp(x\otimes y,x'\otimes y') \ar[r]^-{b_{x',y'}\circ-\circ b_{x,y}^{-1}} & \CC^\sharp(y\otimes x,y'\otimes x'). \\
}}
\end{equation}
Comparing \eqref{eqn:lr1} with \eqref{eqn:lr2}, we see that $L$ and $R$ are commutative.

$(2)$ is clear from the commutative diagram \eqref{eqn:lr2}.

$(3)$ is a consequence of $(1)$ and $(2)$ because the enriched monoidal structure of $Z(\CC^\sharp)$ is induced from that of $\CC^\sharp$.
\end{proof}

\begin{exam}
Let $\CC^\sharp$ be an enriched monoidal category with a single object as shown in Example \ref {exam:single-obj}. Then $Z(\CC^\sharp)$ contains $\CC^\sharp$ as a full subcategory. This example shows that the Drinfeld center of a monoidal category enriched over $\CB$ is not necessarily enriched over a symmetric monoidal subcategory of $\CB$.
\end{exam}

\begin{rem}
It is possible to define the Drinfeld center $Z(\CC^\sharp)$ alternatively as the enriched category of $\CC^\sharp$-$\CC^\sharp$-bimodule functors $\CC^\sharp\to\CC^\sharp$ as in the unenriched case.
\end{rem}

An object $x$ of a braided monoidal category $\CB$ is {\em transparent} if the double braiding of $x$ with any object $y\in\CB$ is trivial: $c_{y,x}\circ c_{x,y}=1$.

\begin{thm} \label{thm:self-enrich}
Let $\CB$ be a rigid braided monoidal category satisfying Condition \eqref{item:star}, and let $\CB^\sharp$ be the enriched monoidal category constructed in Example \ref{exam:B1} and \ref{exam:B2}. The Drinfeld center $Z(\CB^\sharp)$ has the same objects as $\CB^\sharp$ and $Z(\CB^\sharp)(x,y)$ is the maximal transparent subobject of $\CB^\sharp(x,y)$.
\end{thm}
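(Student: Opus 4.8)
The plan is to establish the two assertions separately: (i) each object of $\CB^\sharp$ carries a unique half-braiding, namely the one induced by the braiding $c$ of $\CB$, so that $Z(\CB^\sharp)$ has the same objects as $\CB^\sharp$; and (ii) for this half-braiding, $Z(\CB^\sharp)(x,y)$ is the maximal transparent subobject of $\CB^\sharp(x,y)=y\otimes x^*$. Everything is local in $\CB$: it only uses the explicit formulas of Examples \ref{exam:B1} and \ref{exam:B2}, the adjunction isomorphisms $\CB(w,v\otimes x^*)\cong\CB(w\otimes x,v)$ supplied by rigidity, and the naturality and hexagon axioms for $c$; one may assume $\CB$ strict braided if desired, but this is not needed.

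\emph{(i) Objects.} The braiding gives, for each $x\in\CB$, the morphisms $\one\to\CB^\sharp(x\otimes z,z\otimes x)$ naming $c_{x,z}$; a direct diagram chase with the composition law of Example \ref{exam:B1}, the enriched tensor functor of Example \ref{exam:B2}, and the hexagon and naturality of $c$ shows these form an enriched natural isomorphism $b_x\colon x\otimes-\to-\otimes x$ whose underlying natural transformation is the standard half-braiding of $x$ in $\CB$; thus $(x,b_x)\in Z(\CB^\sharp)$. For uniqueness, let $b_x'$ be any half-braiding for $x$ in $\CB^\sharp$. Enriched naturality of $b_x'$ is strictly stronger than naturality of its underlying transformation; evaluating the enriched-naturality square of $b_x'\colon x\otimes-\to-\otimes x$ at the pair $(z,\one_{\CB^\sharp})$ — so that its source is $\CB^\sharp(z,\one_{\CB^\sharp})=z^*$ and $(b_x')_{\one_{\CB^\sharp}}$ is the canonical unit isomorphism — yields an identity of morphisms in $\CB$ out of $z^*$ into which, after inserting the enriched-functor structures of $x\otimes-$ and $-\otimes x$ from Example \ref{exam:B2}, the unique solution is $(b_x')_z=c_{x,z}$. (This is precisely where the enriched structure of $\CB^\sharp$, built from $c$ rather than $c^{-1}$, breaks the $c\leftrightarrow c^{-1}$ symmetry: the half-braiding $c^{-1}_{-,x}$ of $x$ in $\CB$ does not lift to an enriched one.) Hence the half-braiding is unique and $Z(\CB^\sharp)$ and $\CB^\sharp$ have the same objects; below we write $Z(\CB^\sharp)(x,y)$ accordingly.

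\emph{(ii) Hom objects.} By Definition \ref{def:center}, $Z(\CB^\sharp)(x,y)$ is the intersection over all $z\in\CB$ of the equalizers of the two composites $\phi_z,\psi_z\colon\CB^\sharp(x,y)\rightrightarrows\CB^\sharp(x\otimes z,z\otimes y)$ in diagram \eqref{diag:equalizer} (with $b_{x,z}=c_{x,z}$, $b_{y,z}=c_{y,z}$); this intersection exists by Condition \eqref{item:star}. Writing $\CB^\sharp(x,y)=y\otimes x^*$ and using the adjunction $\CB\big(w,\CB^\sharp(x\otimes z,z\otimes y)\big)\cong\CB\big(w\otimes x\otimes z,z\otimes y\big)$, the maps $\phi_z$ and $\psi_z$ correspond to morphisms $(y\otimes x^*)\otimes x\otimes z\to z\otimes y$. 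Unwinding Examples \ref{exam:B1} and \ref{exam:B2}: naturality of $c$ applied to the evaluation $\ev\colon(y\otimes x^*)\otimes x\to y$ (given by $1_y\otimes v_x$ in the notation of Example \ref{exam:B1}) together with the hexagon identifies $\psi_z$ with $(1_z\otimes\ev)\circ(c_{y\otimes x^*,z}\otimes 1_x)\circ(1\otimes c_{x,z})$, while $\phi_z$ is the same expression with $c_{y\otimes x^*,z}$ replaced by $c^{-1}_{z,\,y\otimes x^*}$; the discrepancy comes from the fact that the two enriched-functor structures $z\otimes-$ and $-\otimes z$ of Example \ref{exam:B2} braid the relevant object past $z$ with opposite handedness (in particular $\phi_z$ and $\psi_z$ induce the same map of underlying $\Hom$-sets, but differ as morphisms of $\CB$). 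Consequently, for a subobject $\iota\colon S\hookrightarrow y\otimes x^*$, restricting $\phi_z,\psi_z$ along $\iota$, cancelling the isomorphism $1_S\otimes c_{x,z}$, transposing along $\CB\big(S\otimes z\otimes x,z\otimes y\big)\cong\CB\big(S\otimes z,z\otimes(y\otimes x^*)\big)$, and using naturality of $c$, the condition $\phi_z\circ\iota=\psi_z\circ\iota$ becomes the single equation
\[
(1_z\otimes\iota)\circ c^{-1}_{z,S}=(1_z\otimes\iota)\circ c_{S,z}\qquad\text{in }\CB\big(S\otimes z,\,z\otimes(y\otimes x^*)\big).
\]

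Since $\iota$ is monic and $z\otimes-$ preserves monics ($\CB$ being rigid braided, $z\otimes-$ has both adjoints), $1_z\otimes\iota$ is monic, so it may be cancelled: $S$ factors through the equalizer of $\phi_z,\psi_z$ iff $c^{-1}_{z,S}=c_{S,z}$, i.e.\ iff $c_{z,S}\circ c_{S,z}=1_{S\otimes z}$. Intersecting over all $z$, a subobject $S$ of $y\otimes x^*$ factors through $Z(\CB^\sharp)(x,y)$ iff $S$ is transparent. Applying this with $S=Z(\CB^\sharp)(x,y)$, which factors through itself, shows $Z(\CB^\sharp)(x,y)$ is a transparent subobject; applying it with $S$ an arbitrary transparent subobject shows every such $S$ factors through $Z(\CB^\sharp)(x,y)$. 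Hence $Z(\CB^\sharp)(x,y)$ is the maximal transparent subobject of $\CB^\sharp(x,y)=y\otimes x^*$. The main obstacle is the identification of $\phi_z$ and $\psi_z$ in part (ii): one must track carefully the left- and right-handed braidings coming from the composition law and the enriched tensor product of Examples \ref{exam:B1}–\ref{exam:B2}, and the rigidity zig-zags, so that their discrepancy is exactly the double braiding $c_{z,S}\circ c_{S,z}$ (its orientation is immaterial, transparency being insensitive to it); the uniqueness argument in part (i) is a lesser technical point.
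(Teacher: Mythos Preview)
Your proof is correct and follows essentially the same route as the paper: specialize one argument of the enriched-naturality square for $b_x$ to $\one$ to force $b_{x,-}=c_{x,-}$, then unwind the equalizer diagram \eqref{diag:equalizer} via rigidity to see that the two legs differ precisely by a double braiding of the subobject with $z$ (the paper gets $z^*$, an immaterial difference in a rigid category). The only notable variations are that you set the \emph{target} to $\one$ in part (i) where the paper sets the \emph{source} to $\one$, and you make explicit the cancellation of the mono $1_z\otimes\iota$ where the paper leaves this implicit.
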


\begin{proof}
Let $b_x$ be a half-braiding for an object $x\in\CB^\sharp$. Since $b_x$ is an enriched natural isomorphism, we have by definition the following commutative diagram for $y,z\in\CB^\sharp$:
$$
\xymatrix@!C=25ex{
  \CB^\sharp(y,z) \ar[r]^-{\otimes\circ(1\otimes\id_x)} \ar[d]_{\otimes\circ(\id_x\otimes 1)} & \CB^\sharp(y\otimes x,z\otimes x) \ar[d]^{-\circ b_{x,y}} \\
  \CB^\sharp(x\otimes y,x\otimes z) \ar[r]^-{b_{x,z}\circ-} & \CB^\sharp(x\otimes y,z\otimes x).  \\
}
$$
Using the explicit construction of $\CB^\sharp$ in Example \ref{exam:B1} and \ref{exam:B2}, this commutative diagram is nothing but the following one:
\begin{equation} \label{diag:en-nat-iso}
\raisebox{2em}{\xymatrix@!C=30ex{
  z\otimes y^* \ar[r]^-{(1\otimes c_{y^*,x\otimes x^*})\circ(1\otimes u_x)} \ar[d]_{(1\otimes c_{x^*,z\otimes y^*})\circ(u_x\otimes 1)} & (z\otimes x)\otimes(y\otimes x)^* \ar[d]^{1\otimes b_{x,y}^*} \\
  (x\otimes z)\otimes(x\otimes y)^* \ar[r]^-{b_{x,z}\otimes 1^*} & (z\otimes x)\otimes(x\otimes y)^*. \\
}}
\end{equation}
When $y=\one$, we obtain the following special case:
$$
\xymatrix@!C=20ex{
  z \ar[r]^-{1\otimes u_x} \ar[d]_{(1\otimes c_{x^*,z})\circ(u_x\otimes 1)} & z\otimes x\otimes x^* \ar[d]^1 \\
  x\otimes z\otimes x^* \ar[r]^-{b_{x,z}\otimes 1} & z\otimes x\otimes x^*, \\
}
$$
which implies that $b_{x,z}=c_{x,z}$. Conversely, setting $b_x=c_{x,-}$ automatically renders Diagram \eqref{diag:en-nat-iso} commutative, hence defines an enriched natural isomorphism.
Therefore, $Z(\CB^\sharp)$ has the same objects as $\CB^\sharp$.

By definition, $Z(\CB^\sharp)((x,b_x),(y,b_y))$ is the maximal subobject $\iota:t\hookrightarrow y\otimes x^*$ rendering the following diagram commutative for all $z\in\CB^\sharp$:
$$
\xymatrix@!C=24ex{
  t \ar[r]^-{u_z\otimes\iota} \ar[d]_{\iota\otimes u_z} & (z\otimes z^*)\otimes(y\otimes x^*) \ar[r]^-{1\otimes c_{z^*,y\otimes x^*}} & (z\otimes y)\otimes(z\otimes x)^* \ar[d]^{1\otimes b_{x,z}^*} \\
  (y\otimes x^*)\otimes(z\otimes z^*) \ar[r]^-{1\otimes c_{x^*,z\otimes z^*}} & (y\otimes z)\otimes(x\otimes z)^* \ar[r]^-{b_{y,z}\otimes 1^*} & (z\otimes y)\otimes(x\otimes z)^*. \\
}
$$
It amounts to say that the double braiding of $t$ and $z^*$ is trivial for all $z\in\CB$. It follows that $t$ is the maximal transparent subobject of $y\otimes x^*$.
\end{proof}

\begin{rem}
The underlying category of $Z(\CB^\sharp)$ does not agree with $Z(\CB)$ in general. For example, when $\CB$ is a rigid symmetric monoidal category, we have $Z(\CB^\sharp)\simeq\CB^\sharp$ by Theorem \ref{thm:self-enrich}, whose underlying category is merely $\CB$.
\end{rem}

A {\em multi-fusion category} is a rigid semisimple $\C$-linear monoidal category with finitely many simple objects and finite-dimensional hom spaces (see, for example, \cite{EGNO}). It is called a {\em fusion category} if the tensor unit is simple. A fusion category $\CB$ is automatically enriched over the symmetric monoidal category of finite-dimensional vector spaces and the latter is canonically embedded in $\CB$ via the tensor unit of $\CB$. In this way (different from Example \ref{exam:B1}), $\CB$ can be viewed as a monoidal category enriched over itself. Similarly, a braided fusion category can be viewed as a braided monoidal category enriched over itself.
Clearly, a braided fusion category $\CB$ satisfies Condition \eqref{item:star}.

A braided fusion category is called {\em nondegenerate} if the tensor unit is the unique transparent simple object. For example, a modular tensor category (see, for example, \cite{BK,DGNO}) is a nondegenerate braided fusion category.

\begin{cor} \label{cor:mtc}
Let $\CB$ be a nondegenerate braided fusion category. Then we have $Z(\CB^\sharp)\simeq\CB$ as braided monoidal categories enriched over $\CB$.
\end{cor}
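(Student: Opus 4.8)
The plan is to apply Theorem~\ref{thm:self-enrich} to the self-enriched structure on $\CB$ coming from the tensor unit (rather than from the rigid structure of Example~\ref{exam:B1}), and then to read off that the resulting hom objects and braiding are exactly those of $\CB$ itself. First I would recall that, since $\CB$ is a braided fusion category, it is enriched over $\mathrm{Vect}_{\C}^{\mathrm{fd}}$, and the latter sits inside $\CB$ via $\one_{\CB}$; this is the enrichment referred to in the sentence preceding the corollary, and it makes $\CB$ into a braided monoidal category enriched over itself, with $\CB^\sharp(x,y)$ being the internal hom, which in a fusion category is the object representing $\Hom_{\CB}(x\otimes-,y)$ (concretely $\CB^\sharp(x,y)=y\otimes x^*$ with composition and identities as in Example~\ref{exam:B1}, but now regarded through the unit-enrichment). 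Since $\CB$ is fusion it satisfies Condition~\eqref{item:star} (equalizers and arbitrary intersections of subobjects exist because everything is finite semisimple), so $Z(\CB^\sharp)$ is defined.

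Next I would invoke Theorem~\ref{thm:self-enrich} verbatim: $Z(\CB^\sharp)$ has the same objects as $\CB^\sharp$, and $Z(\CB^\sharp)(x,y)$ is the maximal transparent subobject of $\CB^\sharp(x,y)$. The key input now is nondegeneracy of $\CB$: the only transparent simple object is $\one_{\CB}$, so for any object $w\in\CB$ the maximal transparent subobject of $w$ is the isotypic component of $\one_{\CB}$ in $w$, i.e. $\Hom_{\CB}(\one,w)\otimes_{\C}\one \cong \one^{\oplus \dim\Hom_{\CB}(\one,w)}$. Applying this with $w=\CB^\sharp(x,y)=y\otimes x^*$, and using $\Hom_{\CB}(\one,y\otimes x^*)\cong\Hom_{\CB}(x,y)$, I get that $Z(\CB^\sharp)(x,y)$ is canonically the object $\Hom_{\CB}(x,y)\otimes_{\C}\one_{\CB}$. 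But this is precisely the hom object of $\CB$ under the unit-enrichment: the internal hom in the unit-enrichment sends $(x,y)$ to $\underline{\Hom}_{\CB}(x,y)=\Hom_{\CB}(x,y)\otimes_{\C}\one_{\CB}$. Hence the identity-on-objects assignment together with these canonical isomorphisms of hom objects constitutes an enriched functor $Z(\CB^\sharp)\to\CB^\sharp_{\mathrm{unit}}$ which is an equivalence of enriched monoidal categories; one checks compatibility with composition, identities, and the tensor constraint, all of which reduce to the corresponding statements for the ordinary fusion category $\CB$ and its internal hom.

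Finally I would verify the braiding matches. By Theorem~\ref{thm:center}(3) the braiding on $Z(\CB^\sharp)$ is $b_{x,y}\colon x\otimes y\to y\otimes x$ where $b_x$ is the half-braiding carried by the object $x$; and in the proof of Theorem~\ref{thm:self-enrich} it was shown that the only half-braiding on any object $x$ of $\CB^\sharp$ is $b_{x,-}=c_{x,-}$, the ambient braiding of $\CB$. Therefore the braiding on $Z(\CB^\sharp)$ is $c_{x,y}$, which is exactly the braiding of $\CB$ viewed as an enriched braided monoidal category over itself. This completes the identification $Z(\CB^\sharp)\simeq\CB$ as braided monoidal categories enriched over $\CB$.

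I expect the main obstacle to be purely bookkeeping rather than conceptual: one must be careful that the enrichment of $\CB$ over itself used here is the unit-enrichment (Vect-enrichment pushed forward along $\one_{\CB}$), which differs from the rigid self-enrichment of Example~\ref{exam:B1}, and that Theorem~\ref{thm:self-enrich}'s description of $Z(\CB^\sharp)(x,y)$ as ``the maximal transparent subobject of $\CB^\sharp(x,y)$'' is applied with the correct $\CB^\sharp$; once the identification $\CB^\sharp(x,y)=y\otimes x^*$ and the formula for maximal transparent subobjects under nondegeneracy are in hand, checking that the canonical hom-isomorphisms respect all the structure morphisms is routine semisimple linear algebra.
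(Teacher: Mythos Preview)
Your core computation is correct and matches the paper's proof exactly: invoke Theorem~\ref{thm:self-enrich} to see that $Z(\CB^\sharp)$ has the same objects as $\CB$ and that $Z(\CB^\sharp)(x,y)$ is the maximal transparent subobject of $y\otimes x^*$; then use nondegeneracy to identify this with $\CB(x,y)\otimes_\C\one$, which is precisely the hom object of the unit-enrichment. Your extra paragraph verifying that the braiding on $Z(\CB^\sharp)$ is $c_{x,-}$ is a welcome elaboration the paper leaves implicit.

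However, your opening paragraph contains a genuine confusion that you should straighten out. You write that you will ``apply Theorem~\ref{thm:self-enrich} to the self-enriched structure on $\CB$ coming from the tensor unit (rather than from the rigid structure of Example~\ref{exam:B1})'', and then describe that enrichment as having $\CB^\sharp(x,y)=y\otimes x^*$. Both claims are off. Theorem~\ref{thm:self-enrich} is stated and proved \emph{only} for the rigid self-enrichment of Examples~\ref{exam:B1} and~\ref{exam:B2}; that is the $\CB^\sharp$ appearing on the left-hand side $Z(\CB^\sharp)$ of the corollary. The unit-enrichment (the $\Vect$-enrichment pushed along $\one_\CB$) has hom objects $\CB(x,y)\otimes_\C\one$, not $y\otimes x^*$; it is what appears on the \emph{right}-hand side, namely the enriched $\CB$ you are trying to match. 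So the correct logic is: take the rigid $\CB^\sharp$, compute $Z(\CB^\sharp)$ via Theorem~\ref{thm:self-enrich}, and observe that the result coincides with the unit-enriched $\CB$. Your second paragraph in fact does exactly this (you use $\CB^\sharp(x,y)=y\otimes x^*$ there), so the mathematics survives; but as written, paragraph~1 misidentifies which enrichment plays which role and would not allow you to invoke Theorem~\ref{thm:self-enrich} at all.
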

\begin{proof}
According to Theorem \ref{thm:self-enrich}, the objects in $Z(\CB^\sharp)$ coincide with those in $\CB$. Since $\one$ is the unique transparent simple object in $\CB$, Theorem \ref{thm:self-enrich} says that $Z(\CB^\sharp)(x,y)\simeq \CB(\one, \CB^\sharp(x,y)) \one = \CB(x,y) \one$.
\end{proof}

\begin{rem}
In recent works \cite{He1,He2}, certain unitary modular tensor categories (completed by separable Hilbert spaces) were shown to be the (usual) Drinfeld center of certain categories of solitons, and the latter were proposed as a candidate for the value of Chern-Simons theory on a point. We expect that a self-enriched modular tensor category $\CB^\sharp$ could be realized as the value of a fully extended Reshetikhin-Turaev TQFT on a point such that the value on a circle is $\CB$ (see \cite{zheng} for more details).
\end{rem}

\section{A generalization}

In this section, we give a generalization of Theorem \ref{thm:self-enrich} and Corollary \ref{cor:mtc}. This generalization is inspired by \cite{MP}. It was shown in \cite{MP} that, under some mild assumptions such as rigidity, all enriched monoidal categories arise from the following construction.

\smallskip

Let $\CB$ be a braided monoidal category satisfying Condition \eqref{item:star}. We use $\bar\CB$ to denote the same monoidal category $\CB$ but equipped with the anti-braiding $\bar c_{x,y} := c^{-1}_{y,x}$.

Let $\CC$ be a monoidal category equipped with a braided oplax monoidal functor $\psi:\bar\CB\to Z(\CC)$ such that $\psi$ induces an equivalence $\psi(\one)\simeq \one_{Z(\CC)}$. Then every object $w\in\CB$ is equipped with a half braiding $b_{\phi(w),-}$ in $\CC$, where $\phi: \bar\CB\to Z(\CC) \to \CC$ is the composition of $\psi$ with the forgetful functor.
Suppose $\CC$ has internal hom in $\CB$. That is, the functor $\phi(-)\otimes x:\CB\to\CC$ has a right adjoint $[x,-]:\CC\to\CB$ for every $x\in\CC$. In particular, we have a unit map $u_w:w\mapsto[x,\phi(w)\otimes x]$ for $w\in\CB$ and a counit map $v_y:\phi[x,y]\otimes x\to y$ for $y\in\CC$ associated to the adjunction. 

\begin{const} \label{const:enrich}
The monoidal category $\CC$ can be canonically promoted to a monoidal category $\CC^\sharp$ enriched over $\CB$. It has the same objects as $\CC$ and $\CC^\sharp(x,y)=[x,y]$. The composition law $\circ: [y,z]\otimes[x,y] \to [x,z]$ is induced by
$$\phi([y,z]\otimes[x,y])\otimes x \to \phi[y,z]\otimes\phi[x,y]\otimes x \xrightarrow{1 \otimes v_y} \phi[y,z]\otimes y \xrightarrow{v_z} z
$$
and $\id_x:\one\to[x,x]$ is induced by $\phi(\one)\otimes x \to x$.
To promote the tensor product $\otimes:\CC\times\CC\to\CC$ to a well-defined enriched functor, one should take
\begin{equation*}
\otimes:\CC^\sharp\times\CC^\sharp((x,y),(x',y')) \to \CC^\sharp(x\otimes y,x'\otimes y')
\end{equation*}
to be the morphism $[x,x']\otimes[y,y'] \to [x\otimes y,x'\otimes y']$ induced by
\begin{align*}
& \phi([x,x']\otimes[y,y'])\otimes x\otimes y \to \phi[x,x']\otimes\phi[y,y']\otimes x\otimes y \\
& \quad\quad\quad \xrightarrow{1 \otimes b_{\phi[y,y'],x}\otimes 1} \phi[x,x']\otimes x\otimes\phi[y,y']\otimes y \xrightarrow{v_{x'}\otimes v_{y'}} x'\otimes y'.
\end{align*}
We will refer to this construction of $\CC^\sharp$ as the {\em canonical construction} of enriched monoidal category from the pair $(\CB, \CC)$.
\end{const}


\begin{exam}
We have already seen some examples of the above construction.
\begin{enumerate}
\item For a braided monoidal category $\CB$, there is a canonical braided monoidal functor $\bar\CB \to Z(\CB)$ defined by $x\mapsto (x, \bar c_{x,-})$. The enriched monoidal category $\CB^\sharp$ constructed in Example \ref{exam:B1} and Example \ref{exam:B2} coincides with the canonical construction from the pair $(\CB,\CB)$.
\item The enriched monoidal category $Z(\CB^\sharp)$ in Theorem \ref{thm:self-enrich} coincides with the canonical construction from the pair $(\CB', \CB)$, where $\CB'$ is the M\"{u}ger center of $\CB$, i.e. the full subcategory of $\CB$ consisting of all transparent objects.
\end{enumerate}
\end{exam}

Given a braided oplax monoidal functor $\psi:\CD\to\CE$, the {\em centralizer} of $\CD$ in $\CE$ is the fully subcategory of $\CE$ consisting of those objects whose double braidings with the essential image of $\psi$ are all trivial.

\begin{thm}
Let $\CC^\sharp$ be the enriched monoidal category from Construction \ref{const:enrich}. The underlying category of the Drinfeld center $Z(\CC^\sharp)$ is the centralizer of $\bar\CB$ in $Z(\CC)$. Moreover, $Z(\CC^\sharp)(x,y)$ for $x,y\in Z(\CC^\sharp)$ represents the functor $Z(\CC)(\psi(-)\otimes x,y): \CB^\op\to Set$.
\end{thm}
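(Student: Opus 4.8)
The plan is to probe the enriched hom objects of $Z(\CC^\sharp)$ with the representable functors $\CB(w,-)$ for $w\in\CB$, transport everything across the internal-hom adjunction $\phi(-)\otimes a\dashv[a,-]$, and then match the resulting conditions on morphisms of $\CC$ with the defining conditions of the Drinfeld center $Z(\CC)$. Throughout one uses that $\CB(w,-)$ preserves all limits, hence carries the equalizers and arbitrary intersections of subobjects appearing in Definition~\ref{def:center} to the corresponding limits of sets, together with the natural bijection $\CB(w,[a,b])\cong\CC(\phi(w)\otimes a,b)$.

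First I would pin down the objects. Given $(x,b_x)\in Z(\CC)$, the question is precisely when the half-braiding $b_x$ underlies an enriched natural isomorphism of the enriched endofunctors $x\otimes-$ and $-\otimes x$ of $\CC^\sharp$. Writing the enriched-naturality square on the hom objects $\CC^\sharp(y,z)=[y,z]$ and transposing it across the adjunction brings in the half-braiding $b_{\phi[y,z],-}$ that Construction~\ref{const:enrich} builds into the enriched $\otimes$; testing on the counit $\phi[y,z]\otimes y\to z$ and, for a general $w\in\CB$, on the unit $u_w\colon w\to[x,\phi(w)\otimes x]$ (which collapses to $1_{\phi(w)\otimes x}$ by a triangle identity) reduces the square to the single requirement that $b_{x,\phi(w)}$ and $b_{\phi(w),x}$ be mutually inverse for every $w\in\CB$. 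Conversely, that requirement, combined with the naturality of $b_x$, its compatibility with $\otimes$, and the functoriality of $\psi$, makes the transposed square commute. Hence the objects of $Z(\CC^\sharp)$ are exactly the objects of $Z(\CC)$ whose double braiding with the essential image of $\psi$ is trivial, i.e.\ those of the centralizer of $\bar\CB$ in $Z(\CC)$. (For $\CC=\CB$ this recovers the computation in the proof of Theorem~\ref{thm:self-enrich}, where the condition specializes to $b_{x,-}=c_{x,-}$.)

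Then I would compute the hom objects. Fix objects $(x,b_x),(y,b_y)$ of $Z(\CC^\sharp)$ and apply $\CB(w,-)$ to the description of $Z(\CC^\sharp)((x,b_x),(y,b_y))$ as the intersection over $z\in\CC^\sharp$ of the equalizers of \eqref{diag:equalizer}. Using $\CB(w,[x,y])\cong\CC(\phi(w)\otimes x,y)$ and $\CB(w,[x\otimes z,z\otimes y])\cong\CC(\phi(w)\otimes x\otimes z,z\otimes y)$ and unwinding the Construction~\ref{const:enrich} formulas for $\otimes$ and $\circ$, one finds that a morphism $f\colon\phi(w)\otimes x\to y$ is sent along the two legs of \eqref{diag:equalizer} to $(1_z\otimes f)\circ(b_{\phi(w),z}\otimes 1_x)\circ(1_{\phi(w)}\otimes b_{x,z})$ and to $b_{y,z}\circ(f\otimes 1_z)$, respectively. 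The first of these is $(1_z\otimes f)$ post-composed with the half-braiding of the object $\psi(w)\otimes x$ of $Z(\CC)$ evaluated at $z$, so equality of the two legs for all $z$ says precisely that $f$ is a morphism $\psi(w)\otimes x\to y$ in $Z(\CC)$. Therefore $\CB(w,Z(\CC^\sharp)((x,b_x),(y,b_y)))\cong Z(\CC)(\psi(w)\otimes x,y)$ naturally in $w$, and by Yoneda $Z(\CC^\sharp)((x,b_x),(y,b_y))$ represents $Z(\CC)(\psi(-)\otimes x,y)\colon\CB^\op\to Set$; the representing object exists because it is carved out of $[x,y]$ as an intersection of equalizers, which is legitimate by Condition~\eqref{item:star}. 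Specializing $w=\one$ and using $\psi(\one)\simeq\one_{Z(\CC)}$ gives $\CB(\one,Z(\CC^\sharp)((x,b_x),(y,b_y)))\cong Z(\CC)(x,y)$, compatibly with the composition law (induced on both sides from that of $\CC$); together with the identification of objects, the underlying category of $Z(\CC^\sharp)$ is the full subcategory of $Z(\CC)$ spanned by the objects centralizing $\bar\CB$, refining Proposition~\ref{prop:center}.

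The bulk of the work, and the main obstacle, is the bookkeeping in those two transposition-and-diagram-chase steps, the hom-object one being the meatier since it has more moving parts: one must push the enriched-naturality square (for the object statement) and the equalizer diagram \eqref{diag:equalizer} (for the hom-object statement) through the internal-hom adjunction and through the $b_{\phi[-,-],-}$-twisted formulas for $\otimes$ and $\circ$, and then recognize the output as the half-braiding compatibility for $\psi(w)\otimes x$. This is the same style of manipulation as in Theorem~\ref{thm:self-enrich}, but with $\CB\neq\CC$ and $\psi$ merely oplax monoidal one must carry along the coherence data of $\psi$ and the triangle identities of the adjunction; everything else — naturality in $w$, the Yoneda step, and compatibility with composition and with the monoidal constraints of $Z(\CC^\sharp)$ — is routine.
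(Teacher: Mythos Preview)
Your proposal is correct and follows essentially the same route as the paper: transpose the enriched-naturality square and the equalizer diagram \eqref{diag:equalizer} across the adjunction $\phi(-)\otimes a\dashv[a,-]$, then recognize the resulting conditions as the centralizer condition and the $Z(\CC)$-morphism condition respectively. The only organizational difference is that you probe everything with $\CB(w,-)$ and invoke Yoneda at the end, whereas the paper works directly with the maximal subobject $t\hookrightarrow[x,y]$; for the object computation the paper specializes to $y=\one$ and then $z=\phi(w)$ rather than testing against the unit $u_w$ as you describe, but the content is the same.
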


\begin{proof}
Let $b_x$ be a half-braiding for an object $x\in\CC^\sharp$. Since $b_x$ is an enriched natural isomorphism, we have a commutative diagram for $y,z\in\CC^\sharp$:
$$
\xymatrix@!C=25ex{
  [y,z] \ar[r]^-{\otimes\circ(1\otimes\id_x)} \ar[d]_{\otimes\circ(\id_x\otimes 1)} & [y\otimes x,z\otimes x] \ar[d]^{-\circ b_{x,y}} \\
  [x\otimes y,x\otimes z] \ar[r]^-{b_{x,z}\circ-} & [x\otimes y,z\otimes x]. \\
}
$$
This diagram is equivalent to the following one via the adjunction between $\phi(-)\otimes x\otimes y$ and $[x\otimes y,-]$
\begin{equation} \label{eqn:brd}
\raisebox{2em}{\xymatrix@!C=15ex{
  \phi[y,z]\otimes x\otimes y \ar[rr]^-{1 \otimes b_{x,y}} \ar[d]_{b_{\phi[y,z],x}\otimes 1} & & \phi[y,z]\otimes y\otimes x \ar[d]^{v_z\otimes 1} \\
   x\otimes\phi[y,z]\otimes y \ar[r]^-{1\otimes v_z} & x\otimes z \ar[r]^{b_{x,z}} & z\otimes x. \\
}}
\end{equation}
Taking $y=\one$, we obtain a commutative diagram:
\begin{equation*} \label{eqn:brd2}
\xymatrix@!C=18ex{
  & \phi[\one,z]\otimes x \ar[ld]_{b_{\phi[\one,z],x}} \ar[rd]^{v_z\otimes 1} \\
   x\otimes\phi[\one,z] \ar[r]^-{b_{x,\phi[\one,z]}} & \phi[\one,z]\otimes x \ar[r]^{v_z\otimes 1} & z\otimes x. \\
}
\end{equation*}
Let $z=\phi(w)$ where $w\in\bar\CB$. Note that the functor $[\one,-]:\CC\to\CB$ is right adjoint to $\phi$. So, the composition $z\xrightarrow{\phi(u_w)}\phi[\one,z]\xrightarrow{v_z}z$ is the identity morphism. We obtain the following commutative diagram:
$$
\xymatrix{z \otimes x  \ar[rr]^-{\phi(u_w)\otimes 1} \ar[d]_{b_{z,x}} & & \phi[\one,z] \otimes x \ar[dr]^{v_z\otimes 1} \ar[d]_{b_{\phi[\one,z],x}}  & \\
x\otimes z \ar[rr]^-{1\otimes\phi(u_w)} \ar[d]_{b_{x,z}} & & x \otimes \phi[\one, z] \ar[d]_{b_{x,\phi[\one,z]}} & z\otimes x, \\
z\otimes x \ar[rr]^-{\phi(u_w)\otimes 1} & & \phi[\one,z] \otimes x \ar[ur]_{v_z\otimes 1}  &
}
$$
where the commutativity of the upper square follows from the fact that $\phi(u_w)$ is a morphism in $Z(\CC)$ (preserving half-braiding); that of the lower square follows from the naturality of the half-braiding $b_{x,-}$. Then we obtain immediately $b_{x,z}\circ b_{z,x}=1$ for all $z=\phi(w), w\in \bar\CB$. This shows that $(x,b_x)$ lies in the centralizer of $\bar\CB$ in $Z(\CC)$.

Conversely, if $(x,b_x)\in Z(\CC)$ lies in the centralizer of $\bar\CB$, then we have $b_{\phi[y,z],x}=b_{x,\phi[y,z]}^{-1}$. It is easy to see that Diagram \eqref{eqn:brd} commutes for all $y,z\in\CC^\sharp$ if replacing the left vertical morphism $b_{\phi[y,z],x}$ by $b_{x,\phi[y,z]}^{-1}$. Hence, $(x,b_x)\in Z(\CC^\sharp)$. This proves the first part of the theorem.

By definition, $Z(\CC^\sharp)((x,b_x),(y,b_y))$ is the maximal subobject $\iota:t\hookrightarrow [x,y]$ rendering the following diagram commutative for all $z\in\CC^\sharp$:
$$
\xymatrix@!C=18ex{
  t \ar[r]^-{\otimes\circ(\id_z\otimes\iota)} \ar[d]_{\otimes\circ(\iota\otimes\id_z)} & [z\otimes x,z\otimes y] \ar[d]^{-\circ b_{x,z}} \\
  [x\otimes z,y\otimes z] \ar[r]^-{b_{y,z}\circ-} & [x\otimes z,z\otimes y]. \\
}
$$
This diagram is equivalent to the following one via the adjunction between $\phi(-)\otimes x\otimes z$ and $[x\otimes z,-]$
$$
\xymatrix@!C=15ex{
  \phi(t)\otimes x\otimes z \ar[r]^-{\phi(\iota)\otimes 1} \ar[d]^{\phi(\iota)\otimes 1} & \phi[x,y]\otimes x\otimes z \ar[r]^{1 \otimes b_{x,z}} & \phi[x,y]\otimes z\otimes x \ar[r]^{b_{\phi[x,y],z}\otimes 1} & z\otimes\phi[x,y]\otimes x \ar[d]^{1 \otimes v_y} \\
  \phi[x,y]\otimes x\otimes z \ar[r]^-{v_y\otimes 1} & y\otimes z \ar[rr]^{b_{y,z}} & & z\otimes y. \\
}
$$
Therefore, $t$ is the maximal subobject such that the composite morphism $\phi(t)\otimes x \xrightarrow{\phi(\iota)\otimes 1} \phi[x,y]\otimes x \xrightarrow{v_y} y$ in $\CC$ preserves half-braiding, i.e. defines a morphism in $Z(\CC)$. In other words, $\CB(-,t) \simeq Z(\CC)(\psi(-)\otimes(x,b_x),(y,b_y))$.
This proves the second part of the theorem.
\end{proof}

A nonzero multi-fusion category is called {\em indecomposable} if it is not a direct sum of two nonzero multi-fusion categories.

\begin{cor} \label{cor:fusion}
Let $\CB$ be a nondegenerate braided fusion category, and let $\CC$ be an indecomposable multi-fusion category equipped with a $\C$-linear additive braided monoidal functor $\psi:\bar\CB\to Z(\CC)$.  Let $\CC^\sharp$ be the enriched monoidal category from Construction \ref{const:enrich}. We have $Z(\CC^\sharp) \simeq \bar\CB'$, where $\bar\CB'$ is the centralizer of $\bar\CB$ in $Z(\CC)$.
\end{cor}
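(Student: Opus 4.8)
The plan is to read everything off from the theorem stated just above, which already identifies the underlying category of $Z(\CC^\sharp)$ with the centralizer $\bar\CB'$ of $\bar\CB$ in $Z(\CC)$ and presents, for $x,y\in\bar\CB'$, the hom object $Z(\CC^\sharp)(x,y)$ as the object of $\CB$ representing the functor $w\mapsto Z(\CC)(\psi(w)\otimes x,y)$. What remains is to evaluate these representing objects and to verify that the resulting enriched structure on $\bar\CB'$ is the expected braided monoidal one.

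First I would collect the relevant structural facts about nondegenerate braided fusion categories. Since $\CB$ — equivalently $\bar\CB$ — is nondegenerate, the braided tensor functor $\psi$ is fully faithful: a $\C$-linear additive monoidal functor between semisimple categories is faithful and sends nonzero objects to nonzero objects, and if a simple object $w$ had $\psi(w)$ a multiple of $\one$, then applying $\psi$ to the double braiding of $w$ with an arbitrary object and invoking faithfulness would force $w$ to be transparent in $\bar\CB$, hence $w\cong\one$; thus $\psi$ has no kernel and is fully faithful (cf.\ \cite{DGNO}). Consequently $\psi(\bar\CB)$ is a nondegenerate braided fusion subcategory of $Z(\CC)$ equivalent to $\bar\CB$. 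Since $\CC$ is an indecomposable multi-fusion category, $Z(\CC)$ is nondegenerate, so M\"uger's theorem \cite{DGNO,EGNO} provides a braided equivalence $Z(\CC)\simeq\psi(\bar\CB)\boxtimes\bar\CB'$ carrying $\psi(w)$ to $\psi(w)\boxtimes\one$ and each $y\in\bar\CB'$ to $\one\boxtimes y$; in particular $\bar\CB'$ is itself a nondegenerate braided fusion category.

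Now I would compute the hom objects. For $x,y\in\bar\CB'$ and $w\in\bar\CB$ the equivalence above sends $\psi(w)\otimes x$ to $\psi(w)\boxtimes x$ and $y$ to $\one\boxtimes y$, so by the defining property of the Deligne tensor product $\boxtimes$ one gets $Z(\CC)(\psi(w)\otimes x,y)\cong\bar\CB(w,\one)\otimes_\C\bar\CB'(x,y)$, using that $\psi$ is fully faithful; as a functor of $w$ this is represented by $\bar\CB'(x,y)\cdot\one$. Hence $Z(\CC^\sharp)(x,y)=\bar\CB'(x,y)\cdot\one$, i.e.\ every hom object of $Z(\CC^\sharp)$ is a free module over $\one$. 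By Theorem \ref{thm:center} the tensor product and the braiding of $Z(\CC^\sharp)$ are induced from those of $\CC^\sharp$, hence from those of $Z(\CC)$, so the underlying braided monoidal category of $Z(\CC^\sharp)$ is $\bar\CB'$ with its structure as a braided fusion subcategory of $Z(\CC)$. Finally, a braided monoidal category enriched over $\CB$ all of whose hom objects are free $\one$-modules is determined, up to enriched braided monoidal equivalence, by its underlying braided monoidal category: since $\CB(\one,V\cdot\one)=V$, a morphism $V\cdot\one\to W\cdot\one$ in $\CB$ is the same datum as a $\C$-linear map $V\to W$, so the composition, the identity morphisms, the tensor functor on hom objects and the coherence isomorphisms are forced to be the $\one$-linear extensions of the underlying data. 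Therefore $Z(\CC^\sharp)$ is equivalent, as a braided monoidal category enriched over $\CB$, to $\bar\CB'$ equipped with the enrichment coming from the canonical symmetric embedding of the category of finite-dimensional vector spaces into $\CB$.

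I expect the only real obstacle to be the structural input of the second paragraph — the full faithfulness of $\psi$ and the M\"uger factorization of $Z(\CC)$ — which is precisely where the hypotheses that $\CB$ is nondegenerate and $\CC$ is indecomposable get used; granting these, the computation of the hom objects is immediate, and the one remaining point that still needs care is the last one, checking that the identification $Z(\CC^\sharp)\simeq\bar\CB'$ matches the monoidal and braiding data and not merely the underlying $\CB$-enriched categories.
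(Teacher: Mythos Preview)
Your proposal is correct and follows essentially the same route as the paper: invoke the preceding theorem to identify the underlying category and the representing property of the hom objects, use full faithfulness of $\psi$ (the paper cites \cite[Corollary~3.26]{dmno} directly rather than sketching an argument) together with the M\"uger factorization $Z(\CC)\simeq\bar\CB\boxtimes\bar\CB'$ from \cite{DGNO}, and conclude $Z(\CC^\sharp)(x,y)\simeq\bar\CB'(x,y)\cdot\one$. Your added paragraph checking that the enriched braided monoidal structure is forced once the hom objects are multiples of $\one$ is more than the paper spells out, but it is in the same spirit; note, though, that your inline sketch for full faithfulness only rules out simples mapping to multiples of $\one$ and does not by itself give fullness, so you are right to defer to the cited reference there.
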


\begin{proof}
Since both $\CB$ and $\CC$ are semisimple, $\CC$ has internal hom in $\CB$ thus $\CC^\sharp$ is well-defined.
By \cite[Corollary 3.26]{dmno}, the functor $\psi:\bar\CB\to Z(\CC)$ is fully faithful.
Then we have $Z(\CC)\simeq\bar\CB\boxtimes\bar\CB'$ by \cite[Theorem 3.13]{DGNO}. This implies that $Z(\CC^\sharp)(x,y)$ lies in the full subcategory of $\CB$ consisting of the direct sums of the tensor unit $\one$, which is nothing but the category of finite-dimensional vector spaces. Therefore, $Z(\CC^\sharp)(x,y) \simeq \bar\CB'(x,y)\one$, as desired.
\end{proof}



\end{document}